\newtheorem{theorem}{\bf Theorem}[section]
\newtheorem{proposition}[theorem]{\bf Proposition}
\title{\LARGE \bf
Efficient Riccati recursion for
optimal control problems \\
with pure-state equality constraints 
}
\author{Sotaro Katayama$^{1}$ and Toshiyuki Ohtsuka$^{1}$% <-this % stops a space
% \thanks{*This work was not supported by any organization}% <-this % stops a space
\thanks{$^{1}$S. Katayama and T. Ohtsuka are with Department of System Science, Graduate School of Informatics, Kyoto University, Kyoto, Japan
        {\tt\small katayama.25w@st.kyoto-u.ac.jp}, 
        {\tt\small ohtsuka@i.kyoto-u.ac.jp}}%
}
\begin{document}

\onecolumn
\noindent
© 2022 IEEE. Personal use of this material is permitted. Permission from IEEE must be obtained for all other uses, in any current or future media, including reprinting/republishing this material for advertising or promotional purposes, creating new collective works, for resale or redistribution to servers or lists, or reuse of any copyrighted component of this work in other works.

\hspace{1cm}

\noindent
\textbf{Published article:} \\ 
\noindent
S. Katayama and T. Ohtsuka, ``Efficient Riccati recursion for optimal control problems with pure-state equality constraints,'' 2022 American Control Conference (ACC), 2022, pp. 3579--3586, doi: 10.23919/ACC53348.2022.9867631.

\twocolumn

\maketitle
\thispagestyle{empty}
\pagestyle{empty}

%%%%%%%%%%%%%%%%%%%%%%%%%%%%%%%%%%%%%%%%%%%%%%%%%%%%%%%%%%%%%%%%%%%%%%%%%%%%%%%%
\begin{abstract}

A novel approach to efficiently treat pure-state equality constraints in optimal control problems (OCPs) using a Riccati recursion algorithm is proposed.
The proposed method transforms a pure-state equality constraint into a mixed state-control constraint such that the constraint is expressed by variables at a certain previous time stage.
It is showed that if the solution satisfies the second-order sufficient conditions of the OCP with the transformed mixed state-control constraints, it is a local minimum of the OCP with the original pure-state constraints.
A Riccati recursion algorithm is derived to solve the OCP using the transformed constraints with linear time complexity in the grid number of the horizon, in contrast to a previous approach that scales cubically with respect to the total dimension of the pure-state equality constraints.
Numerical experiments on the whole-body optimal control of quadrupedal gaits that involve pure-state equality constraints owing to contact switches demonstrate the effectiveness of the proposed method over existing approaches.

\end{abstract}

%%%%%%%%%%%%%%%%%%%%%%%%%%%%%%%%%%%%%%%%%%%%%%%%%%%%%%%%%%%%%%%%%%%%%%%%%%%%%%%%
\section{Introduction}
Optimal control underlies the motion planning and control of dynamical systems such as trajectory optimization (TO) and model predictive control (MPC) \cite{bib:mpcbook}.
TO achieves versatile and dynamically consistent planning by solving optimal control problems (OCPs). 
MPC leverages the same advantages as TO in real-time control by solving an OCP online within a particular sampling interval. 
It is essential, particularly for MPC, to solve direct OCPs within a short computational time, even if they involve highly complicated dynamics, a large dimensional state, and a long horizon.

Newton-type methods are the most practical methods used for solving OCPs in terms of the convergence speed. 
One of the most efficient algorithms that implement the Newton-type methods to solve both the single-shooting and multiple-shooting OCPs of large-scale systems is the Riccati recursion algorithm \cite{bib:mpcbook, bib:Giaf:2016}.
The Riccati recursion algorithm scales only linearly with respect to the number of discretization grids of the horizon, in contrast to the direct methods (i.e., methods applying the Cholesky decomposition directly to the entire Hessian matrix) that scale cubically.
For example, it was successfully applied to solve OCPs even for significantly complex systems such as legged robots with large degrees of freedom within very short computational times \cite{bib:legged, bib:legged2}.

However, there is a drawback of the Riccati recursion algorithm: it cannot efficiently treat pure-state equality constraints, which often arise, for example, in waypoint constraints, terminal constraints, switching constraints in hybrid systems such as legged robots \cite{bib:penalty, bib:DDP:jumpRobot}, and inequality constraints handled by active-set methods \cite{bib:constrainedRiccati}.
\cite{bib:constrainedRiccati} extended the Riccati recursion algorithm for pure-state equality constraints and illustrated its effectiveness over the direct method for certain quadratic programming problems.
However, the computational time of this method scales cubically with respect to the total dimension of pure-state equality constraints over the horizon, and it is inefficient when the total dimension is large.
\cite{bib:projection:LQR} proposed a projection approach to treat pure-state equality constraints with the Riccati recursion algorithm efficiently.
However, this approach can only treat the equality constraints whose relative degree is 1, for example, velocity-level constraints of second-order systems, and cannot treat position-level constraints for such systems, which is a very common and practical class of constraints.

Other popular constraint-handling methods used with the Riccati recursion algorithm are the penalty function method and the augmented Lagrangian (AL) method.
For example, \cite{bib:penalty} used the penalty function method and \cite{bib:DDP:jumpRobot} used the AL method to treat pure-state equality constraints representing the switching constraints arising in OCPs involving quadrupedal gaits.
A transformation of the linear-quadratic OCP into a dual problem for the efficient Riccati recursion \cite{bib:axehillPhDThesis} also used the penalty function method to treat the pure-state constraints.
However, the penalty function method practically yields only the approximated solution, as illustrated by the numerical results obtained in \cite{bib:penalty}.
The AL method can treat constraints better than the penalty function method, for example, it converges to the optimal solution even if the penalty parameter remains at a finite value. However, it generally lacks convergence speed compared with the Newton-type methods that achieve superlinear or quadratic convergence. 
The AL method essentially achieves linear convergence and it yields superlinear convergence only if the penalty parameter goes to infinity \cite{bib:Bertsekas:2016}, which is an impractical assumption.
For example, \cite{bib:DDP:jumpRobot} used the AL method to consider the switching constraints in an OCP of quadruped bouncing motion. The AL method required a large number of the iterations (up to 300), although a simple 2D robot model was used and only one cycle of the bouncing motion was considered, which led to very low-dimensional (only four dimensions in all) pure-state equality constraints.

In this paper, we propose a novel approach to efficiently treat pure-state equality constraints in OCPs using a Riccati recursion algorithm.
The proposed method transforms a pure-state equality constraint into a mixed state-control constraint such that the constraint is expressed by variables at a certain previous time stage.
We show a relationship between an OCP with the original pure-state constraint (the original OCP) and an OCP with the transformed mixed state-control constraint (the transformed OCP); if the solution satisfies the first-order necessary conditions (FONC) and/or second-order sufficient conditions (SOSC) of the transformed OCP, then the solution also satisfies the FONC and/or SOSC of the original OCP.
Therefore, if we find a solution that satisfies the SOSC of the transformed OCP, it is a local minimum of the original OCP.
We then derive a Riccati recursion algorithm to solve the transformed OCP with linear time complexity in the grid number of the horizon, in contrast with the previous approach \cite{bib:constrainedRiccati} that scales cubically with respect to the total dimension of pure-state equality constraints.
Moreover, because the proposed method is in substance a Newton's method for an optimization problem with equality constraints, the proposed method achieves superlinear or quadratic convergence, which distinguishes our approach from the penalty function method and the AL method in terms of convergence speed.
We present numerical experiments of the whole-body optimal control of quadrupedal gaits that involve pure-state equality constraints owing to contact switches, which represent the position-level constraints of a second-order system, and demonstrate the effectiveness of the proposed method over the existing approaches, that is, the approach of \cite{bib:constrainedRiccati} and the AL method.

This paper is organized as follows. 
In Section \ref{section:formulation}, we transform an OCP with a pure-state equality constraint into an OCP with a mixed state-control equality constraint. 
In Section \ref{section:Riccati}, we derive a Riccati recursion algorithm to apply Newton's method efficiently to the OCP with transformed mixed state-control equality constraints.  
In Section \ref{section:Proof}, the theoretical properties of the proposed transformation of OCPs are described.  
In Section \ref{section:numericalExperiments}, the proposed method is compared with existing methods and its effectiveness is demonstrated in terms of computational time and convergence speed. 
In Section \ref{section:conclu}, we conclude with a brief summary and mention of future work.

\textit{Notation:} We describe the partial derivatives of a differentiable function with respect to certain variables using a function with subscripts; that is, $f_x (x)$ denotes $\frac{\partial f}{\partial x} (x)$ and $g_{x y} (x, y)$ denotes $\frac{\partial^2 g}{\partial x \partial y} (x, y)$.

\section{Transformation of Optimal Control Problem with Pure-State Equality Constraints}\label{section:formulation}
\subsection{Original Optimal Control Problem}
We consider the following discrete-time OCP:
Find the state $x_0, ..., x_N \in \mathbb{R}^{n_x}$ and the control input $u_0, ..., u_{N-1} \in \mathbb{R}^{n_u}$ minimizing the cost function
\begin{equation}\label{eq:cost}
    J = \varphi (x_N) + \sum_{i=0}^{N-1} L (x_i, u_i) 
\end{equation}
subject to the state equation
\begin{equation}\label{eq:f}
    x_i + f (x_i, u_i) - x_{i+1} = 0 ,  \;\; i \in \left\{ 0, ..., N-1 \right\},
\end{equation}
a pure-state equality constraint
\begin{equation}\label{eq:phi}
    \phi (x_k) = 0, \; \phi (x_k) \in \mathbb{R}^{n_c},
\end{equation}
and the initial state constraint
\begin{equation}\label{eq:x0}
    x_0 - \bar{x} = 0, \; \bar{x} \in \mathbb{R}^{n_x}.
\end{equation}
In the following, we assume the form of the state as 
$x_i = \begin{bmatrix}
        q_i ^{\rm T} &
        v_i ^{\rm T}
\end{bmatrix} ^{\rm T}$,
where $q_i \in \mathbb{R}^n$ and $v_i \in \mathbb{R}^n$ represent the generalized coordinates and velocity of the system, respectively,
and assume the form of the state equation as follows:
\begin{equation}\label{eq:stateEquationForm}
    f (x_i, u_i) := 
    \begin{bmatrix}
        f ^{(q)} (x_i)  \\ 
        f ^{(v)} (x_i, u_i) 
    \end{bmatrix}, \;\; f ^{(q)} (x_i), \; f ^{(v)} (x_i, u_i) \in \mathbb{R}^n.
\end{equation}
We also assume that $k \geq 2$, $n_u \leq n$, $n_c \leq n$, and that the constraint (\ref{eq:phi}) depends only on the generalized coordinate, that is, its Jacobian is expressed as follows:
\begin{equation}\label{eq:positionLevelConstraints}
    \phi_x (x_k) = 
    \begin{bmatrix}
        \phi_q (q_k) & O
    \end{bmatrix}, \;\; \phi_q (q_k) \in \mathbb{R}^{n_c \times n}.
\end{equation}
The state equation (\ref{eq:stateEquationForm}) mainly represents a second-order Lagrangian system with $n$ degrees of freedom, and a constraint (\ref{eq:phi}), whose Jacobian is of form (\ref{eq:positionLevelConstraints}) represents a position-level constraint (that is, the relative degree of the constraint with respect to the control input is 2), which is a very common and practical class of problem settings.

\subsection{Transformation of Optimal Control Problem}
To solve the aforementioned OCP efficiently, we transform the original pure-state equality constraint (\ref{eq:phi}) into a mixed state-control equality constraint that is equivalent to (\ref{eq:phi}) as long as (\ref{eq:f}) is satisfied.
If (\ref{eq:f}) is satisfied, 
\begin{align*}
    x_{k} = \; & x_{k-2} + f (x_{k-2}, u_{k-2} ) \\
               & + f (x_{k-2} + f (x_{k-2}, u_{k-2}), u_{k-1} ) 
\end{align*}
holds and therefore
\begin{align}\label{eq:phi:transformed0}
    \phi ( & x_{k-2} + f (x_{k-2}, u_{k-2} ) \notag \\
           & + f (x_{k-2} + f (x_{k-2}, u_{k-2}), u_{k-1} )) = 0 
\end{align}
is equivalent to (\ref{eq:phi}).
Furthermore, because $\phi (\cdot)$ only depends on the generalized coordinate, (\ref{eq:phi:transformed0}) is equivalent to
\begin{equation}\label{eq:phi:relaxed}
    \phi (x_{k-2} + f (x_{k-2}, u_{k-2} ) + g (x_{k-2} + f (x_{k-2}, u_{k-2}) ) ) = 0,
\end{equation}
where we define 
\begin{equation}\label{eq:g}
    g(x) := \begin{bmatrix}
        f^{(q)} (x) \\
        0
    \end{bmatrix}.
\end{equation}
Therefore, the constraint (\ref{eq:phi:relaxed}) is equivalent to (\ref{eq:phi}) if (\ref{eq:f}) is satisfied.
Therefore, we consider (\ref{eq:phi:relaxed}) instead of (\ref{eq:phi}) in the following. 
We herein summarize the original and transformed OCPs as follows:

{\bf Problem 1 – Original OCP:}
Find the solution $x_0, ..., x_N$, $u_0, ..., u_{N-1}$ minimizing (\ref{eq:cost}) subject to (\ref{eq:f})--(\ref{eq:x0}).

{\bf Problem 2 - Transformed OCP:}
Find the solution $x_0, ..., x_N$, $u_0, ..., u_{N-1}$ minimizing (\ref{eq:cost}) subject to (\ref{eq:f}), (\ref{eq:x0}), and (\ref{eq:phi:relaxed}).

In fact, we have the following relations between the transformed OCP and the original OCP:
If the solution $x_0, ..., x_N$, $u_0, ..., u_{N-1}$ satisfies the FONC of the transformed OCP, it also satisfies the FONC of the original OCP. If the solution $x_0, ..., x_N$, $u_0, ..., u_{N-1}$ satisfies the SOSC of the transformed OCP, it also satisfies the SOSC of the original OCP.
Therefore, if we find the solution $x_0, ..., x_N$, $u_0, ..., u_{N-1}$ that satisfies the SOSC of the transformed OCP, it is a local minimum of the original OCP.
We show these theoretical points later in Section \ref{section:Proof}.

It should be noted that it is trivial to apply the proposed approach to constraints of relative degree 1: we transform (\ref{eq:phi}) into a mixed state-control constraint represented by $x_{k-1}$ and $u_{k-1}$ in the same manner as mentioned before. 
Therefore, our approach comprises the approach of \cite{bib:projection:LQR} that also involves constraint transformation but can treat only constraints of relative degree 1. 
The difference between our approach and that of \cite{bib:projection:LQR} is that we introduce the constraint transformation in the original nonlinear problem and leverage the structure of the system (\ref{eq:f}), whereas in approach \cite{bib:projection:LQR}, constraint transformation is introduced in the linear subproblem arising in the Newton-type iterations without any assumptions in the state equation.
As a result, only our approach can treat the practically important constraints of relative degree 2 with a theoretical justification for the transformation.

It should also be noted that the proposed approach is completely different from the classical transformation of pure-state equality constraints in continuous-time OCPs by considering their derivatives with respect to time, for example, in Section 3.4 of \cite{bib:Bryson:1975}.
To explain this difference, we consider that there is a pure-state constraint of the form of (\ref{eq:phi}) over a time interval.
The classical method then transforms the pure-state constraint over the interval into a combination of the pure-state equality constraints (\ref{eq:phi}) and $\frac{d}{dt}\phi (x) = 0$ at a point on the interval and the mixed state-control constraint $\frac{d^2}{dt^2} \phi (x) = 0$ over the interval, where $\frac{d}{dt}$ and $\frac{d^2}{dt^2}$ yield a kind of Lie derivative. 
Therefore, the classical method still needs to consider the pure-state equality constraints, whereas our approach transforms all pure-state equality constraints into the corresponding mixed state-control constraints.

\subsection{Optimality Conditions}
We derive the optimality conditions, known as FONC, of the transformed OCP.
We first define the Hamiltonian
\begin{equation*}
    H (x, u, \lambda) := L (x, u) + \lambda ^{\rm T} f (x, u)
\end{equation*}
and 
\begin{align*}
    & \tilde{H} (x, u, \lambda, \nu) \notag \\ 
    & := H (x, u, \lambda) + \nu ^{\rm T} \phi (x + f(x, u) + g(x + f (x, u))).
\end{align*}
We also define the intermediate time stages in which the constraint is not active as $\bar{I} := \left\{1, ..., k-3, k-1, ..., N -1 \right\}$.
The optimality conditions are then derived as follows \cite{bib:Bryson:1975}:
\begin{equation}\label{eq:KKT:phix}
    \varphi_{x} ^{\rm T} (x_N) - \lambda_N = 0,
\end{equation}
\begin{equation}\label{eq:KKT:Hx}
    H_x ^{\rm T} (x_i, u_i, \lambda_{i+1}) + \lambda_{i+1} - \lambda_i = 0
\end{equation}
and
\begin{equation}\label{eq:KKT:Hu}
    H_u ^{\rm T} (x_i, u_i, \lambda_{i+1}) = 0
\end{equation}
for $i \in \bar{I}$,
\begin{align}\label{eq:KKT:Hx:k-2}
    & \tilde{H}_x ^{\rm T} (x_{k-2}, u_{k-2}, \lambda_{k-1}, \nu) + \lambda_{k-1} - \lambda_{k-2} \notag \\
    & = {H}_x ^{\rm T} (x_{k-2}, u_{k-2}, \lambda_{k-1}) + \lambda_{k-1} - \lambda_{k-2} \notag \\ 
    & \;\;\;\;\; + (I + f_x ^{\rm T} (x_{k-2}, u_{k-2})) (I + g_x ^{\rm T}) \phi_x ^{\rm T} \nu = 0,  
\end{align}
and
\begin{align}\label{eq:KKT:Hu:k-2}
    & \tilde{H}_u ^{\rm T} (x_{k-2}, u_{k-2}, \lambda_{k-1}, \nu) \notag \\ 
    & = {H}_u ^{\rm T} (x_{k-2}, u_{k-2}, \lambda_{k-1}) \notag \\ 
    & \;\;\;\;\; + f_u ^{\rm T} (x_{k-2}, u_{k-2}) (I + g_x ^{\rm T}) \phi_x ^{\rm T} \nu = 0, 
\end{align}
where $\lambda_0, ..., \lambda_N$ are the Lagrange multipliers with respect to (\ref{eq:x0}) and (\ref{eq:f}), and $\nu$ is that with respect to (\ref{eq:phi:relaxed}).
It should be noted that we have omitted the arguments from $\phi_x $ and $g_x$ in (\ref{eq:KKT:Hx:k-2}) and (\ref{eq:KKT:Hu:k-2}).

\section{Riccati Recursion}\label{section:Riccati}
\subsection{Linearization for Newton's Method}
To apply Newton's method for the transformed OCP, we linearize the optimality conditions.
It should be noted that we adopt the direct multiple shooting method \cite{bib:condensing}, that is, we consider $x_0, ..., x_N$, $u_0, ..., u_{N-1}$, $\lambda_0, ..., \lambda_N$, and $\nu$ as the optimization variables.

\subsubsection{Terminal stage}
At the terminal stage ($i=N$), we have 
\begin{equation}\label{eq:newton:QxxN}
    Q_{xx, N} \Delta x_N - \Delta \lambda_{N} + \bar{l}_{x, N} = 0,
\end{equation}
where we define $Q_{xx, N} := \varphi_{xx} (x_N)$. Further, we define $\bar{l}_{x, N}$ using the left-hand side of (\ref{eq:KKT:phix}).

\subsubsection{Intermediate stages without equality constraint}
In the intermediate stages without an equality constraint ($i \in \bar{I}$), we have
\begin{equation}\label{eq:newton:xi}
    A_i \Delta x_i + B_i \Delta u_i - \Delta x_{i+1} + \bar{x}_i = 0, 
\end{equation}
\begin{equation}\label{eq:newton:Qxxi}
    Q_{xx, i} \Delta x_i + Q_{xu, i} \Delta u_i + A_i ^{\rm T} \Delta \lambda_{i+1}
    - \Delta \lambda_i + \bar{l}_{x, i} = 0,  
\end{equation}
and
\begin{equation}\label{eq:newton:Quui}
    Q_{xu, i} ^{\rm T} \Delta x_i + Q_{uu, i} \Delta u_i + B_i ^{\rm T} \Delta \lambda_{i+1}
    + \bar{l}_{u, i} = 0,
\end{equation}
where we define 
$A_i := I + f_x (x_i, u_i) $, 
$B_i \allowbreak := f_u (x_i, u_i) $, 
$Q_{xx, i} \allowbreak := H_{xx} (x_i, u_i, \lambda_{i+1})$,
$Q_{xu, i} \allowbreak := H_{xu} (x_i, u_i, \lambda_{i+1})$,
$Q_{uu, i} \allowbreak := H_{uu} (x_i, u_i, \lambda_{i+1})$.
Further, we define $\bar{x}_{i}$, $\bar{l}_{x, i}$, and $\bar{l}_{u, i}$ using the left-hand sides of (\ref{eq:f}), (\ref{eq:KKT:Hx}), and (\ref{eq:KKT:Hu}), respectively. 

\subsubsection{Intermediate stage with an equality constraint}
At the intermediate stages with an equality constraint ($i = k -2$), we have
(\ref{eq:newton:xi}) for $i = k-2$ and have
\begin{align}\label{eq:newton:Qxxk-2}
    Q_{xx, k-2} \Delta x_{k-2} & + Q_{xu, k-2} \Delta u_{k-2} + A_{k-2} ^{\rm T} \Delta \lambda_{k-1} \notag \\ 
    & - \Delta \lambda_{k-2} + C ^{\rm T} \Delta \nu + \bar{l}_{x, k-2} = 0, 
\end{align}
\begin{align}\label{eq:newton:Quuk-2}
    Q_{xu, k-2} ^{\rm T} \Delta x_{k-2} + Q_{uu, k-2} \Delta u_{k-2} + B_{k-2} ^{\rm T} \Delta \lambda_{k-1} \notag \\ 
    + D ^{\rm T} \Delta \nu + \bar{l}_{u, k-2} = 0, 
\end{align}
and
\begin{equation}\label{eq:newton:psi}
    C \Delta x_{k-2} + D \Delta u_{k-2} + \bar{\phi} = 0,
\end{equation}
where we define 
$Q_{xx, k-2} \allowbreak := \tilde{H}_{xx} (x_{k-2}, u_{k-2}, \lambda_{k-1}, \nu) $,
$Q_{xu, k-2} \allowbreak := \tilde{H}_{xu} (x_{k-2}, u_{k-2}, \lambda_{k-1}, \nu) $,
$Q_{uu, k-2} \allowbreak := \tilde{H}_{uu} (x_{k-2}, u_{k-2}, \lambda_{k-1}, \nu) $,
$C \allowbreak := \phi_x (I + g_x) A_{k-2}$,
$D \allowbreak := \phi_x (I + g_x) B_{k-2}$.
We further define $\bar{l}_{x, k-2}$, $\bar{l}_{u, k-2}$, and $\bar{\phi}$ using the left-hand sides of (\ref{eq:KKT:Hx:k-2}), (\ref{eq:KKT:Hu:k-2}), and (\ref{eq:phi:relaxed}), respectively.

\subsubsection{Initial stage}
Finally, we have 
\begin{equation}\label{eq:newton:x0}
    \Delta x_0 + x_0 - \bar{x} = 0.
\end{equation}

It should be noted that we can apply the Gauss-Newton Hessian approximation, which improves the computational speed when the constraints (\ref{eq:f}) and (\ref{eq:phi:relaxed}) are too complicated for their second-order derivatives to be computed.
$Q_{xx, N}$ and $Q_{xx, i}$, $Q_{xu, i}$, and $Q_{uu, i}$ for $i \in \left\{ 0, ..., N-1 \right\}$ are then approximated using only the cost function (\ref{eq:cost}) and do not depend on the Lagrange multipliers.

\subsection{Derivation of Riccati Recursion}
We derive a Riccati recursion algorithm to solve the linear equations for Newton's method (\ref{eq:newton:QxxN})--(\ref{eq:newton:x0}) efficiently.
As the standard Riccati recursion algorithm (\cite{bib:mpcbook, bib:Giaf:2016}), our goal is to derive a series of matrices $P_i$ and vectors $s_i$ such that 
\begin{equation}\label{eq:riccati:lmdi}
    \Delta \lambda_i = P_i \Delta x_i - s_i  
\end{equation}
holds.

\subsubsection{Terminal stage} 
At the terminal stage ($i = N$), we have
\begin{equation}\label{eq:riccati:N}
    P_N = Q_{xx, N}, \; s_N = - \bar{l}_N.
\end{equation}
In the forward recursion, we have $\Delta x_N$, and we compute $\Delta \lambda_N$ from (\ref{eq:riccati:lmdi}).

\subsubsection{Intermediate stages without an equality constraint}
At the intermediate stages without an equality constraint ($i \in \bar{I}$), we have the following standard backward Riccati recursion (\cite{bib:mpcbook, bib:Giaf:2016}) for given $P_{i+1}$ and $s_{i+1}$ satisfying (\ref{eq:riccati:lmdi}):
\begin{equation}\label{eq:riccati:Fi}
    F_i := Q_{xx, i} + A_i ^{\rm T} P_{i+1} A_i,
\end{equation}
\begin{equation}\label{eq:riccati:Hi}
    H_i := Q_{xu, i} + A_i ^{\rm T} P_{i+1} B_i,
\end{equation}
\begin{equation}\label{eq:riccati:Gi}
    G_i := Q_{uu, i} + B_i ^{\rm T} P_{i+1} B_i,
\end{equation}
\begin{equation}\label{eq:riccati:Ki}
    K_i := - G_i ^{-1} H_i ^{\rm T}, \; k_i := - G_i ^{-1} (B_i ^{\rm T} P_{i+1} \bar{x}_i - B_i ^{\rm T} s_{i+1} + \bar{l}_{u, i}),
\end{equation}
and
\begin{equation}\label{eq:riccati:Pi}
    P_i := F_i - K_i ^{\rm T} G_{i} K_i, \; s_i := A_i ^{\rm T} (s_{i+1} - P_{i+1} \bar{x}_i) - \bar{l}_{x, i} - H_i k_i.
\end{equation}
In the forward recursion, for a particular value of $\Delta x_i$, we compute $\Delta u_i$ from  
\begin{equation}\label{eq:riccati:ui}
    \Delta u_i = K_i \Delta x_i + k_i,  
\end{equation}
$\Delta \lambda_i$ from (\ref{eq:riccati:lmdi}), and $\Delta x_{i+1}$ from (\ref{eq:newton:xi}).

\subsubsection{Intermediate stage with an equality constraint}
At the intermediate stage with an equality constraint ($i = k-2 $), we first define (\ref{eq:riccati:Fi})--(\ref{eq:riccati:Pi}) for $i=k-2$ for the specific values of $P_{k-1}$ and $s_{k-1}$ that satisfies (\ref{eq:riccati:lmdi}).
We then have the relations that are used in the forward recursion for $k-1$ as follows:
\begin{equation}\label{eq:riccati:uk-2nu}
    \begin{bmatrix}
        \Delta u_{k-2} \\
        \Delta \nu
    \end{bmatrix}
    = 
    \begin{bmatrix}
        K_{k-2} \\
        M 
    \end{bmatrix}
    \Delta x_{k-2}
    + 
    \begin{bmatrix}
        k_{k-2} \\
        m 
    \end{bmatrix},
\end{equation}
where we define
\begin{equation}\label{eq:riccati:KM}
    \begin{bmatrix}
        K_{k-2} \\
        M 
    \end{bmatrix}
    := - \begin{bmatrix}
        G_{k-2} &  D ^{\rm T} \\
        D & O
    \end{bmatrix}^{-1}
    \begin{bmatrix}
        H_{k-2} ^{\rm T} \\
        C 
    \end{bmatrix}
\end{equation}
and
\begin{align}\label{eq:riccati:km}
    \begin{bmatrix}
        k_{k-2} \\
        m
    \end{bmatrix}
    := - & \begin{bmatrix}
        G_{k-2} & D ^{\rm T} \\
        D & O
    \end{bmatrix}^{-1} \notag \\
    & \begin{bmatrix}
        B_{k-2} ^{\rm T} P_{k-1} \bar{x}_{k-2} - B_{k-2} ^{\rm T} s_{k-1} + \bar{l}_{u, k-2} \\
        \bar{\phi}
    \end{bmatrix}.
\end{align}
We then obtain the backward recursions
\begin{equation}\label{eq:riccati:Pk-2}
    P_{k-2} := F_{k-2} - \begin{bmatrix} K_{k-2} ^{\rm T} & M ^{\rm T} \end{bmatrix} 
    \begin{bmatrix}
        G_{k-2} & D ^{\rm T} \\
        D & O
    \end{bmatrix}
    \begin{bmatrix}
        K_{k-2} \\
        M 
    \end{bmatrix} 
\end{equation}
and 
\begin{align}\label{eq:riccati:sk-2}
    s_{k-2} := \; & A_{k-2} ^{\rm T} (s_{k-1} - P_{k-1} \bar{x}_{k-2}) - \bar{l}_{x, k-2} - H_{k-2} k_{k-2} \notag \\ 
    & - C ^{\rm T} m.
\end{align}

\subsection{Algorithm, Convergence, and Computational Analysis}
We summarize the single Newton iteration, that is, the computation of the Newton direction for a particular solution, using the proposed Riccati recursion algorithm  (Algorithm \ref{alg1}).
In the first step, we formulate the linear equations of Newton's method, that is, we compute the coefficient matrices and residuals of (\ref{eq:newton:QxxN})--(\ref{eq:newton:x0}) (line 1).
This step can leverage parallel computing.
Second, we perform the backward Riccati recursion and compute $P_i$ and $z_i$ for $i \in \left\{ 0, ..., N \right\}$ (lines 4--11).
Third, we perform the forward Riccati recursion and compute the Newton directions for all the variables (lines 12--20).

Because the proposed method is in substance a Newton's method for an optimization problem with equality constraints, it achieves superlinear or quadratic convergence, for example, by Proposition 4.4.3 of \cite{bib:Bertsekas:2016},
which distinguishes the convergence behavior of the proposed method from that of the AL method, popularly used to treat the pure-state equality constraints with the Riccati recursion algorithm.
The AL method achieves superlinear convergence only if the penalty parameter goes to infinity, which is an impractical assumption; otherwise, its convergence rate is just linear.

It should be noted that we can trivially apply the proposed method to OCPs with multiple pure-state equality constraints on the horizon.
When there are multiple time stages involving constraint (\ref{eq:phi}) on the horizon, we compute the coefficient matrices and residuals in (\ref{eq:newton:Qxxk-2})--(\ref{eq:newton:psi}) for each of the time stages in line 1 of Algorithm \ref{alg1}, apply line 7 of Algorithm \ref{alg1} for each of the time stages in the backward Riccati recursion, and apply line 15 of Algorithm \ref{alg1} for each of the time stages in the forward Riccati recursion.

The proposed method is particularly efficient when there are several stages with pure-state equality constraints on the horizon.
Suppose that there is an $n_{c, i}$-dimensional pure-state equality constraint at each time stage $i$ of the horizon ($n_{c, i} = 0$ if there is no constraint at stage $i$).
The proposed method then computes the inverse of a matrix whose size is $(n_u + n_{c, i}) \times (n_u + n_{c, i})$ at each time stage in the backward recursion.
In contrast, the previous approach of \cite{bib:constrainedRiccati} requires the computation of the inverse of a matrix of size $(\sum_{i=0}^{N} n_{c, i}) \times (\sum_{i=0}^{N} n_{c, i})$.
Broadly speaking, the computational burden of the proposed method with respect to the grid number $N$ is $O (N)$, whereas that of the approach in \cite{bib:constrainedRiccati} is $O(N ^{3})$.

It should be noted that it is easy to apply the proposed method to the single-shooting methods, which are popular in robotic applications \cite{bib:legged, bib:legged2,  bib:penalty, bib:DDP:jumpRobot}, by considering only $u_0, ..., u_{N-1}$ and $\nu$ as the decision variables.
In the single-shooting methods, before line 1 of Algorithm 1, we first compute $x_0, ..., x_N$ based on $x(t_0)$ and $u_0, ..., u_{N-1}$ using the state equation (\ref{eq:f}) sequentially.
Further, we compute $\lambda_N, ..., \lambda_0$ using (\ref{eq:KKT:phix}), (\ref{eq:KKT:Hx}), and (\ref{eq:KKT:Hx:k-2}) based on $u_0, ..., u_{N-1}$, $\nu$, and $x_0, ..., x_N$, respectively, in the backward recursion (lines 5--11 of Algorithm 1).
We can then compute the Newton directions $\Delta u_0, ..., \Delta u_{N-1}$ and $\Delta \nu$ using the same (or a similar) forward recursion (lines 12--20 of Algorithm 1).

\begin{algorithm}[tb]
\caption{Computation of Newton direction using proposed Riccati recursion}
\label{alg1}
\begin{algorithmic}[1]
    \Require Initial state ${x} (t_0)$, the current solution $x_0, ..., x_{N}$, $u_0, ..., u_{N-1}$, and Lagrange multipliers $\lambda_0, ..., \lambda_N$, $\nu$.
    \Ensure Newton directions $\Delta x_0$, ..., $\Delta x_{N}$, $\Delta u_0$, ..., $\Delta u_{N-1}$, $\Delta \lambda_0$, ..., $\Delta \lambda_N$, and $\Delta \nu$.
    \For{$i=0,\cdots,N$} {\bf in parallel} 
    \State Computes the matrices and residuals in (\ref{eq:newton:QxxN})--(\ref{eq:newton:x0}).
    \EndFor
    \State Compute $P_N$ and $z_N$ from (\ref{eq:riccati:N}). 
    \For{$i=N,\cdots,0$} {\bf in serial}
        \If{$i = k-2$} 
            \State Computes $P_{k-2}$ and $z_{k-2}$ from (\ref{eq:riccati:Fi})--(\ref{eq:riccati:Gi}), (\ref{eq:riccati:KM}), (\ref{eq:riccati:km}), and (\ref{eq:riccati:sk-2}). 
        \Else
            \State Computes $P_i$ and $z_i$ from (\ref{eq:riccati:Fi})--(\ref{eq:riccati:Pi}).
        \EndIf
    \EndFor
    \State Compute $\Delta x_0$ from (\ref{eq:newton:x0}).
    \For{$i=0,\cdots,N-1$} {\bf in serial}
        \If{$i = k-2$}
            \State Compute $\Delta u_{k-2}$, $\Delta \nu$, $\Delta \lambda_{k-2}$, and $\Delta x_{k-1}$ from (\ref{eq:riccati:uk-2nu}), (\ref{eq:riccati:lmdi}), and (\ref{eq:newton:xi}).
        \Else
            \State Compute $\Delta u_i$, $\Delta \lambda_i$, and $\Delta x_{i+1}$ from (\ref{eq:riccati:ui}), (\ref{eq:riccati:lmdi}), and (\ref{eq:newton:xi}).
        \EndIf
    \EndFor
    \State Compute $\Delta \lambda_N$ from (\ref{eq:riccati:lmdi}).
\end{algorithmic}
\end{algorithm}

\section{Theoretical Properties of Optimal Control Problem Transformation}\label{section:Proof}

We show the theoretical relationships between the transformed OCP and the original OCP in this section.
The first theorem concerns a stationary point of the transformed OCP and a stationary point of the original OCP.

\begin{theorem}
Suppose that $x_0, ..., x_N$, $u_0, ..., u_{N-1}$, $\lambda_0, ..., \lambda_N$, and $\nu$ satisfy the FONC of the transformed OCP.
Then, there exist the Lagrange multipliers $\lambda_0 ^*, ..., \lambda_N ^*$ and $\nu ^*$ such that $x_0, ..., x_N$, $u_0, ..., u_{N-1}$, $\lambda_0 ^*, ..., \lambda_N ^*$, and $\nu ^*$ satisfy the FONC of the original OCP. 
\end{theorem}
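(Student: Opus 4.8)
The plan is to prove the statement by explicitly constructing $\lambda_0^*, \dots, \lambda_N^*$ and $\nu^*$ and checking the FONC of the original OCP one equation at a time. Set $\nu^* := \nu$ and write the unknown correction as $\mu_i := \lambda_i^* - \lambda_i$. The FONC of the original OCP are primal feasibility — the state equation (\ref{eq:f}), the initial constraint (\ref{eq:x0}), and $\phi(x_k)=0$ — together with the adjoint/stationarity conditions, which are (\ref{eq:KKT:phix}), (\ref{eq:KKT:Hx}) for $i\neq k$ and (\ref{eq:KKT:Hu}) for all $i$ with $\lambda^*$ in place of $\lambda$, the adjoint equation at $i=k$ being $H_x^{\rm T}(x_k,u_k,\lambda_{k+1}^*)+\lambda_{k+1}^*-\lambda_k^*+\phi_x^{\rm T}(x_k)\nu^* = 0$, and with no counterpart of (\ref{eq:KKT:Hx:k-2})--(\ref{eq:KKT:Hu:k-2}) at $i=k-2$. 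Primal feasibility is immediate: (\ref{eq:f}) and (\ref{eq:x0}) are common to both problems, and $\phi(x_k)=0$ is exactly the equivalence between (\ref{eq:phi:relaxed}) and (\ref{eq:phi}) established in Section \ref{section:formulation}, using that $x_{k-1}+g(x_{k-1})$ and $x_k$ share the same generalized-coordinate block.

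Imposing the original stationarity conditions and subtracting the transformed ones (\ref{eq:KKT:phix})--(\ref{eq:KKT:Hu:k-2}), and using that $H$ is affine in $\lambda$, one sees the $\mu_i$ must obey a backward recursion: $\mu_N = 0$; at every intermediate stage $\mu_i = A_i^{\rm T}\mu_{i+1}$ together with $B_i^{\rm T}\mu_{i+1}=0$, with two exceptions — $\mu_k = A_k^{\rm T}\mu_{k+1} + \phi_x^{\rm T}(x_k)\nu$, and at $i=k-2$ the recursion is $\mu_{k-2} = A_{k-2}^{\rm T}\bigl(\mu_{k-1} - (I+g_x^{\rm T})\phi_x^{\rm T}\nu\bigr)$ with the compatibility requirement $B_{k-2}^{\rm T}\bigl(\mu_{k-1} - (I+g_x^{\rm T})\phi_x^{\rm T}\nu\bigr) = 0$. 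Running this backward: $\mu_i = 0$ for $i\geq k+1$, hence $\mu_k = \phi_x^{\rm T}(x_k)\nu$, hence $\mu_{k-1} = A_{k-1}^{\rm T}\phi_x^{\rm T}(x_k)\nu$; I will show this last quantity equals $(I+g_x^{\rm T})\phi_x^{\rm T}\nu$, which then forces $\mu_{k-2}=0$ and, by the plain recursion, $\mu_i = 0$ for all $i\leq k-2$. Thus $\lambda_i^* = \lambda_i$ except at $i\in\{k-1,k\}$: the construction transfers the $\nu$-contribution from stage $k-2$, where it sits in the transformed problem, to stage $k$, where the pure-state constraint lives in the original problem.

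The verification then reduces to three structural facts, all consequences of the forms (\ref{eq:stateEquationForm}) and (\ref{eq:g}) of $f$ and $g$ and the position-level assumption (\ref{eq:positionLevelConstraints}). First, the Jacobian $\phi_x$ appearing in (\ref{eq:KKT:Hx:k-2})--(\ref{eq:KKT:Hu:k-2}) is evaluated at $x_{k-1}+g(x_{k-1})$, whose $q$-block equals $q_k$; since $\phi_x$ depends only on $q$, this $\phi_x$ equals $\phi_x(x_k)$. Second, $B_i^{\rm T}\mu_{i+1}=0$ at every relevant stage, and in particular $B_{k-1}^{\rm T}\phi_x^{\rm T}(x_k)\nu = 0$, because by (\ref{eq:stateEquationForm}) $f_u$ has a zero $q$-block while $\phi_x^{\rm T}\nu$ is supported in the $q$-block; this leaves the stationarity condition (\ref{eq:KKT:Hu}) at the now-unconstrained stage $k-1$ intact. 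Third, the key identity $A_{k-1}^{\rm T}\phi_x^{\rm T}(x_k)\nu = (I+f_x^{\rm T}(x_{k-1},u_{k-1}))\phi_x^{\rm T}(x_k)\nu = (I+g_x^{\rm T})\phi_x^{\rm T}\nu$ holds because $f_x$ and $g_x$ have the same $q$-block and $\phi_x^{\rm T}\nu$ is supported in the $q$-block, so only that block of $f_x$ acts. Substituting $\mu_k,\mu_{k-1}$ back, the stationarity conditions at $i=k$ and $i=k-1$ collapse to (\ref{eq:KKT:Hx})--(\ref{eq:KKT:Hu}) at those stages, and the condition at $i=k-2$ collapses to (\ref{eq:KKT:Hx:k-2})--(\ref{eq:KKT:Hu:k-2}), which hold by hypothesis; all other stages are trivial since $\mu$ vanishes there.

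I expect the main obstacle to be the bookkeeping behind the third fact: one must keep the distinct evaluation points apart ($x_k$ and $x_{k-1}+g(x_{k-1})$ for $\phi$, $x_{k-1}$ for $g$, and $(x_{k-1},u_{k-1})$ and $(x_{k-2},u_{k-2})$ for $f$) and exploit the $q/v$ block structure to see that $I+g_x^{\rm T}$ acts on $q$-supported covectors exactly as $A_{k-1}^{\rm T}$ does; without this, the correction would leak into the $u_{k-1}$-stationarity at the unconstrained stage $k-1$. Everything else is a routine telescoping once this identity and the two support arguments are in place.
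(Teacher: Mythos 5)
Your proposal is correct and arrives at exactly the multipliers the paper constructs in (\ref{eq:proof:lmd:star})--(\ref{eq:proof:nu:star}) — namely $\nu^*=\nu$, $\lambda_k^*=\lambda_k+\phi_x^{\rm T}\nu$, $\lambda_{k-1}^*=\lambda_{k-1}+(I+g_x^{\rm T})\phi_x^{\rm T}\nu$, and $\lambda_i^*=\lambda_i$ elsewhere — and it rests on the same three structural facts the paper invokes ($\phi_x$ agreeing at the two evaluation points via (\ref{eq:proof:phix:equivalence}), $\phi_x f_u(x_{k-1},u_{k-1})=O$, and $\phi_x f_x(x_{k-1},u_{k-1})=\phi_x g_x$). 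The only difference is presentational: you derive the correction $\mu_i=\lambda_i^*-\lambda_i$ by a backward recursion rather than positing it outright, which is a tidy way of motivating the paper's choice but not a different proof.
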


\begin{proof}
First, we define the intermediate time stages without the active constraints of the original OCP as $\tilde{I} := \left\{ 0,...,k-1, k+1, ..., N-1 \right\}$.
The FONC of the original OCP is then expressed by (\ref{eq:f})--(\ref{eq:x0}), (\ref{eq:KKT:phix}), and (\ref{eq:KKT:Hx}) for $i \in \tilde{I}$,  
\begin{equation}\label{eq:HxkOriginal}
    H_x ^{\rm T} (x_k, u_k, \lambda_{k+1} ^*) + \phi_{x} ^{\rm T} \nu ^* + \lambda_{k+1} ^* - \lambda_k ^* = 0,
\end{equation}
and (\ref{eq:KKT:Hu}) for $i \in \left\{0,..., N-1 \right\}$, in which (\ref{eq:f})--(\ref{eq:x0}) are trivially satisfied because $x_0, ..., x_N$ and $u_0, ..., u_{N-1}$ satisfy the FONC of the transformed OCP.
It should be noted that because $x_0, ..., x_N$ and $u_0, ..., u_{N-1}$ satisfy (\ref{eq:f}) and $\phi_x (\cdot)$ only depends on the generalized coordinate, 
\begin{align}\label{eq:proof:phix:equivalence}
    \phi_x (x_k) = \phi_x (& x_{k-2} + f(x_{k-2}, u_{k-2}) \notag \\ 
                           & + g(x_{k-2} + f(x_{k-2}, u_{k-2})))
\end{align}
holds. 
Therefore, we describe both the left- and right-hand sides of (\ref{eq:proof:phix:equivalence}) as $\phi_x$ in this proof.
Let
\begin{equation}\label{eq:proof:lmd:star}
    \lambda_i ^* = \lambda_i , \;\; i \in \left\{ 0, ..., k-2, k+1, ..., N \right\}.
\end{equation}
Then, (\ref{eq:KKT:phix}), (\ref{eq:KKT:Hx}) for $i \in \left\{ 0, ..., k-3, k+1, ..., N \right\}$, and (\ref{eq:KKT:Hu}) for $i \in \left\{ 0, ..., k-2, k+1, ..., N \right\}$ of the original OCP are reduced to those of the transformed OCP and are, therefore, satisfied.
Furthermore, let
\begin{equation}\label{eq:proof:nu:star}
    \nu ^* = \nu, \; 
    \lambda_{k} ^* = \lambda_{k} + \phi_{x} ^{\rm T} \nu ^*, \; 
    \lambda_{k-1} ^* = \lambda_{k-1} + (I + g_{x} ^{\rm T} ) \phi_{x} ^{\rm T} \nu ^*.
\end{equation}
Then, (\ref{eq:KKT:Hx}) and (\ref{eq:KKT:Hu}) for $i = k-2, k-1$ and (\ref{eq:HxkOriginal}) of the original OCP are also reduced to (\ref{eq:KKT:Hx:k-2}), (\ref{eq:KKT:Hu:k-2}), (\ref{eq:KKT:Hx}) and (\ref{eq:KKT:Hu}) for $i = k-1$, and (\ref{eq:KKT:Hx}) for $i = k$ of the transformed OCP, respectively, noting that $\phi_x f_{x} (x_{k-1}, u_{k-1}) = \phi_x g_{x}$ and $\phi_x f_u (x_{k-1}, u_{k-1}) = O$, and are, therefore, satisfied, which completes the proof.
% Moreover, using (\ref{eq:proof:nu:star}), we have 
% \begin{align*}
%     & H_x (x_{k-2}, u_{k-2}, \lambda_{k-1} ^*) + \lambda_{k-1} ^* - \lambda_{k-2} ^* \\ 
%     & = \tilde{H}_x (x_{k-2}, u_{k-2}, \lambda_{k-1}, \nu) + \lambda_{k-1} - \lambda_{k-2} = 0,
% \end{align*}
% \begin{align*}
%     & H_u (x_{k-1}, u_{k-1}, \lambda_{k} ^*) \\ 
%     & = H_u (x_{k-1}, u_{k-1}, \lambda_{k}) + f_u ^{\rm T} (x_{k-1}, u_{k-1}) \phi_x ^{\rm T} \nu ^* = 0,
% \end{align*}
% where we use $\phi_x f_u (x_i, u_i) = O$, and
% \begin{equation*}
%     H_u (x_{k-2}, u_{k-2}, \lambda_{k-1} ^*) 
%     = \tilde{H}_u (x_{k-2}, u_{k-2}, \lambda_{k-1}, \nu) = 0,
% \end{equation*}
% % \begin{align*}
% %     & H_u (x_{k-2}, u_{k-2}, \lambda_{k-1} ^*) \\ 
% %     & = \tilde{H}_u (x_{k-2}, u_{k-2}, \lambda_{k-1}, \nu) \\
% %     & \;\;\;\; + f_u ^{\rm T} (x_{k-2}, u_{k-2}) (I + g_x ^{\rm T}) \phi_x ^{\rm T} (\nu ^* - \nu ^*) = 0,
% % \end{align*}
% which completes the proof.
\end{proof}

From the proof of Theorem 4.1, we can obtain the Lagrange multipliers at a stationary point of the original OCP corresponding to those of the transformed OCP.
The following theorem concerns the sufficiency of the optimality.

\begin{theorem}
Suppose that $x_0, ..., x_N$, $u_0, ..., u_{N-1}$, $\lambda_0, ..., \lambda_N$, and $\nu$ satisfy the SOSC of the transformed OCP.
Then, there exist the Lagrange multipliers $\lambda_0 ^*, ..., \lambda_N ^*$, and $\nu ^*$ such that $x_0, ..., x_N$, $u_0, ..., u_{N-1}$, $\lambda_0 ^*, ..., \lambda_N ^*$, and $\nu ^*$ satisfy the SOSC of the original OCP. 
\end{theorem}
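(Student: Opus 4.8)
The plan is to reduce Theorem 4.2 to Theorem 4.1 together with a comparison of second-order terms. Since the SOSC of the transformed OCP entails its FONC, Theorem 4.1 already produces multipliers $\lambda_0 ^*, ..., \lambda_N ^*, \nu ^*$, given explicitly by (\ref{eq:proof:lmd:star})--(\ref{eq:proof:nu:star}), for which the FONC of the original OCP holds. Recalling that the SOSC of a problem consists of its FONC plus positive definiteness of the Hessian of the Lagrangian on the critical subspace (for equality constraints, the kernel of the constraint Jacobian), it remains only to verify this curvature condition for the original OCP at the point $x_0, ..., x_N$, $u_0, ..., u_{N-1}$, $\lambda_0 ^*, ..., \lambda_N ^*$, $\nu ^*$.

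First I would show that the two OCPs have the same critical subspace. Each critical subspace is the intersection of the kernels of the linearized dynamics (\ref{eq:newton:xi}) and of (\ref{eq:newton:x0}) with the kernel of the linearization of the relevant pure-state constraint. On directions satisfying (\ref{eq:newton:xi}) one has $\Delta x_{k-1} = A_{k-2} \Delta x_{k-2} + B_{k-2} \Delta u_{k-2}$ and $\Delta x_k = A_{k-1} \Delta x_{k-1} + B_{k-1} \Delta u_{k-1}$; using $\phi_x f_x (x_{k-1}, u_{k-1}) = \phi_x g_x$ and $\phi_x f_u (x_{k-1}, u_{k-1}) = O$ (which follow from (\ref{eq:stateEquationForm}) and (\ref{eq:positionLevelConstraints}), exactly as in the proof of Theorem 4.1), the linearized original constraint $\phi_x (x_k) \Delta x_k$ collapses to $\phi_x (I + g_x)(A_{k-2} \Delta x_{k-2} + B_{k-2} \Delta u_{k-2}) = C \Delta x_{k-2} + D \Delta u_{k-2}$, i.e. to the linearization (\ref{eq:newton:psi}) of the transformed constraint. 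Hence both critical subspaces equal one and the same subspace $T$. The same elimination shows that the original and transformed constraint Jacobians differ only by row operations through the dynamics rows, so they have equal rank and the constraint qualification under which the SOSC is stated holds for the original OCP if and only if it holds for the transformed one.

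Next I would compare the two Lagrangians on the common feasible set. Because (\ref{eq:phi:relaxed}) is equivalent to (\ref{eq:phi}) whenever (\ref{eq:f}) holds, Problems 1 and 2 have the same feasible set, and on it every constraint term of either Lagrangian vanishes, so that $\mathcal{L}^* = J = \mathcal{L}$ there, where $\mathcal{L}^*$ and $\mathcal{L}$ denote the Lagrangians of the original and transformed OCPs respectively. Fix a nonzero $w \in T$ and pick a $C^2$ curve $\gamma (t)$, feasible for all small $t$, with $\gamma (0)$ the current solution and $\gamma '(0) = w$ (such a curve exists because, under the constraint qualification above, the feasible set is locally a $C^2$ manifold with tangent space $T$). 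Differentiating $\mathcal{L}^* (\gamma (t)) = \mathcal{L} (\gamma (t))$ twice at $t = 0$, the terms involving $\gamma ''(0)$ drop because $\mathcal{L}^*$ and $\mathcal{L}$ are both stationary at $\gamma (0)$ by the FONC, leaving $w ^{\rm T} \nabla^2 \mathcal{L}^* w = w ^{\rm T} \nabla^2 \mathcal{L}\, w$. Since the SOSC of the transformed OCP gives $w ^{\rm T} \nabla^2 \mathcal{L}\, w > 0$, the same inequality holds for $\mathcal{L}^*$, which establishes the SOSC of the original OCP.

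I expect the curvature comparison to be the crux. The delicate points are ensuring the $\gamma ''(0)$ contributions genuinely cancel — which is exactly why Theorem 4.1 (first-order stationarity of both Lagrangians) must be invoked first — and that a feasible $C^2$ curve tangent to an arbitrary $w \in T$ exists, i.e. that the constraint qualification is in force. An alternative, more computational route that avoids curves is to form $\nabla^2 (\mathcal{L}^* - \mathcal{L})$ directly from the multiplier substitution (\ref{eq:proof:lmd:star})--(\ref{eq:proof:nu:star}) and show that its quadratic form vanishes on $T$; this mirrors the style of the proof of Theorem 4.1 but requires carrying the second derivatives of $f$, $g$, and $\phi$ through the chain rule.
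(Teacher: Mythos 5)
Your proposal is correct in outline and reaches the paper's conclusion, but the crux --- the curvature comparison --- is handled by a genuinely different argument. The paper proceeds purely algebraically: it substitutes the multipliers (\ref{eq:proof:lmd:star})--(\ref{eq:proof:nu:star}) into the Hessian blocks of the original Lagrangian, writes out the corrections $Q^*_{xx,i}-Q_{xx,i}$, etc., explicitly at stages $k-2$, $k-1$, and $k$ (carrying $\phi_{xx}$, $g_{xx}$, and $f_{xx,k-1}$ through the chain rule), and then uses the structural identities $(\nu\cdot\phi_{xx})(I+f_{x,k-1})=(\nu\cdot\phi_{xx})(I+g_x)$, $(\nu\cdot\phi_{xx})f_{u,k-1}=O$, and $(\phi_x^{\rm T}\nu)\cdot f_{xu,k-1}=O$ to show that the quadratic form of the difference vanishes on the critical subspace --- i.e., it executes exactly the ``alternative, more computational route'' you sketch at the end. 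Your primary route instead notes that both Lagrangians equal $J$ on the common feasible set and are stationary at the point, so their second derivatives along feasible $C^2$ curves agree; this is cleaner and avoids all second-derivative bookkeeping. Your identification of the two critical subspaces via $\phi_x f_{x,k-1}=\phi_x g_x$ and $\phi_x f_{u,k-1}=O$ is actually more explicit than the paper's one-line appeal to (\ref{eq:proof:phix:equivalence}).

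The one substantive caveat is the one you flag yourself: the curve argument requires a constraint qualification so that every $w$ in the kernel of the combined constraint Jacobian is the tangent of a feasible $C^2$ curve. The SOSC as the paper uses it --- positive definiteness of the Lagrangian Hessian on the kernel of the Jacobian, as in (\ref{eq:proof:Lagrangian}) and (\ref{eq:proof:LagrangianOrigin}) --- suffices for strict local optimality without any regularity assumption, and the paper's algebraic computation correspondingly needs none. If LICQ fails, the kernel can be strictly larger than the set of tangents to feasible curves, and your argument then certifies positivity only on the smaller set. So either add a regularity hypothesis (which is harmless in the intended applications but is not assumed in the paper), or fall back to the explicit Hessian computation you mention, which is precisely the paper's proof.
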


\begin{proof}
Because $x_0, ..., x_N$, $u_0, ..., u_{N-1}$, $\lambda_0, ..., \lambda_N$, and $\nu$ also satisfy the FONC of the transformed OCP, we have $\lambda_0 ^*, ..., \lambda_N ^*$ and $\nu ^*$ defined by (\ref{eq:proof:lmd:star}) and (\ref{eq:proof:nu:star}) such that $x_0, ..., x_N$, $u_0, ..., u_{N-1}$, $\lambda_0 ^*, ..., \lambda_N ^*$, and $\nu ^*$ satisfy the FONC of the original OCP from Theorem 4.1.
From the assumption of the SOSC of the transformed OCP, we have
\begin{equation}\label{eq:proof:Lagrangian}
    \delta x_N ^{\rm T} Q_{xx, N} \delta x_N  
    + \sum_{i=0}^{N-1} 
    \begin{bmatrix}
        \delta x_i ^{\rm T} \\
        \delta u_i ^{\rm T}
    \end{bmatrix} ^{\rm T}
    \begin{bmatrix}
        Q_{xx, i} & Q_{xu, i} \\
        Q_{ux, i} ^{\rm T} & Q_{uu, i} 
    \end{bmatrix}
    \begin{bmatrix}
        \delta x_i \\
        \delta u_i
    \end{bmatrix} > 0
\end{equation}
for arbitrary $\delta x_i$ and $\delta u_i$ satisfying 
$\delta x_0 = 0$, $(I + f_{x, i}) \delta x_i + f_{u, i} \delta u_i - \delta x_{i+1} = 0$ for $i \in \left\{ 0, ..., N-1 \right\}$ and $\phi_x (I + g_x) (I + f_{x, k-2} ) \delta x_{k-2} + \phi_x (I + g_x) f_{u, k-2} \delta u_{k-2} = 0$, where we describe $f_{x, i} := f_x (x_i, u_i)$ and $f_{u, i} := f_u (x_i, u_i)$ for $i \in \left\{ 0, ..., N-1 \right\}$. 
We introduce the Hessians of the original OCP as $Q_{xx, N} ^* := \varphi_{xx} (x_N) = Q_{xx, N}$
and
$Q_{xx, i} ^* \allowbreak := H_{xx} (x_i, u_i, \lambda_{i+1} ^*)$,
$Q_{xu, i} ^* \allowbreak := H_{xu} (x_i, u_i, \lambda_{i+1} ^*)$,
and
$Q_{uu, i} ^* \allowbreak := H_{uu} (x_i, u_i, \lambda_{i+1} ^*)$
for $i \in \tilde{I}$. 
Further, we introduce
$Q_{xx, k} ^* \allowbreak := H_{xx} (x_k, u_k, \lambda_{k+1} ^*, \nu^*)$,
$Q_{xu, k} ^* \allowbreak := H_{xu} (x_k, u_k, \lambda_{k+1} ^*, \nu^*)$,
and
$Q_{uu, k} ^* \allowbreak := H_{uu} (x_k, u_k, \lambda_{k+1} ^*, \nu^*)$.
We can then complete the proof if 
\begin{align}\label{eq:proof:LagrangianOrigin}
    & {\delta x_N ^*} ^{\rm T} Q_{xx, N} ^* \delta x_N ^* \notag \\ 
    & + \sum_{i=0}^{N-1} 
    \begin{bmatrix}
        {\delta x_i ^*} ^{\rm T} \\
        {\delta u_i ^*} ^{\rm T}
    \end{bmatrix} ^{\rm T}
    \begin{bmatrix}
        Q_{xx, i} ^* & Q_{xu, i} ^* \\
        {Q_{ux, i} ^*} ^{\rm T} & Q_{uu, i} ^*
    \end{bmatrix}
    \begin{bmatrix}
        \delta x_i ^* \\
        \delta u_i ^*
    \end{bmatrix} > 0
\end{align}
holds for arbitrary $\delta x_i ^*$ and $\delta u_i ^*$ satisfying $\delta x_0 ^* = 0$, $(I + f_{x, i}) \delta x_i ^* + f_{u, i} \delta u_i ^* - \delta x_{i+1} ^* = 0$ for $i \in \left\{ 0, ..., N-1 \right\}$
and $\phi_x \delta x_k ^* = 0$.
First, we can see that the subspace of the feasible variations $\delta x_i ^*$ and $\delta u_i ^*$ is identical to that of $\delta x_i$ and $\delta u_i$ because (\ref{eq:proof:phix:equivalence}) holds.
Then, we consider $\delta x_{i} ^*$ and $\delta u_{i} ^*$ as being identical to $\delta x_{i}$ and $\delta u_{i}$.
Next, by substituting (\ref{eq:proof:lmd:star}) and (\ref{eq:proof:nu:star}) into the Hessians of the original OCP, we obtain
$Q_{xx, i} ^* = Q_{xx, i}$,
$Q_{xu, i} ^* = Q_{xu, i}$,
and
$Q_{uu, i} ^* = Q_{uu, i}$
for $i \in \left\{0, ..., k-3, k+1, ..., N-1 \right\}$,
$Q_{xx, k} ^* = Q_{xx, k} + \nu \cdot \phi_{xx}$, $Q_{xu, k} ^* = Q_{xu, k}$, $Q_{uu, k} ^* = Q_{uu, k}$,
$Q_{xx, k-1} ^* = Q_{xx, k-1} + (\phi_x ^{\rm T} \nu) \cdot f_{xx, k-1}$,
$Q_{xu, k-1} ^* = Q_{xu, k-1} + (\phi_x ^{\rm T} \nu) \cdot f_{xu, k-1}$,
$Q_{uu, k-1} ^* = Q_{uu, k-1} + (\phi_x ^{\rm T} \nu) \cdot f_{uu, k-1}$,
\begin{align*}
    & Q_{xx, k-2} ^* = Q_{xx, k-2} \notag \\ 
    & - (I + f_{x, k-2} ^{\rm T}) ((\phi_x ^{\rm T} \nu) \cdot g_{xx}) (I + f_{x, k-2}) \notag \\ 
    & - (I + f_{x, k-2} ^{\rm T}) (I + g_x ^{\rm T}) (\nu \cdot \phi_{xx}) (I + g_x) (I + f_{x, k-2}) ,
\end{align*}
\begin{align*}
    & Q_{xu, k-2} ^* = Q_{xu, k-2} - (I + f_{x, k-2} ^{\rm T}) ((\phi_x ^{\rm T} \nu) \cdot g_{xx}) f_{u, k-2} \notag \\ 
    & - (I + f_{x, k-2} ^{\rm T}) (I + g_x ^{\rm T}) (\nu \cdot \phi_{xx}) (I + g_x) f_{u, k-2}, 
\end{align*}
and
\begin{equation*}
    Q_{uu, k-2} ^* = Q_{uu, k-2} - f_{u, k-2} ^{\rm T} (I + g_x ^{\rm T}) (\nu \cdot \phi_{xx}) (I + g_x) f_{u, k-2},
\end{equation*}
where $f_{xx, i} := f_{xx} (x_i, u_i)$, $f_{xu, i} := f_{xu} (x_i, u_i)$, $f_{uu, i} := f_{uu} (x_i, u_i)$, and the notation ``$\cdot$" denotes vector--tensor multiplication.
Since the FONC of the original OCP holds and 
$\nu \cdot \phi_{xx} = \begin{bmatrix}
    \nu \cdot \phi_{qq} & O \\ 
    O & O 
\end{bmatrix}$,
we have $(\nu \cdot \phi_{xx}) (I + f_{x, k-1}) = (\nu \cdot \phi_{xx}) (I + g_{x})$ and $(\nu \cdot \phi_{xx}) f_{u, k-1} = O$, which yields $\delta x_k ^{\rm T} (\nu \cdot \phi_{xx}) \delta x_k = \delta x_{k-1} ^{\rm T} (I+ g_{x} ^{\rm T}) (\nu \cdot \phi_{xx}) (I+ g_{x}) \delta x_{k-1}$.
In addition, from the structures of (\ref{eq:stateEquationForm}), (\ref{eq:positionLevelConstraints}), and (\ref{eq:g}), we have
$(\phi_x ^{\rm T} \nu) \cdot f_{xx, k-1} = (\phi_x ^{\rm T} \nu) \cdot g_{xx}$,
$(\phi_x ^{\rm T} \nu) \cdot f_{xu, k-1} = O$,
and
$(\phi_x ^{\rm T} \nu) \cdot f_{uu, k-1} = O$.
By substituting these relations, the left-hand side of (\ref{eq:proof:LagrangianOrigin}) is reduced to the left-hand side of (\ref{eq:proof:Lagrangian}), which completes the proof.
\end{proof}

We summarize the property of the proposed transformation in the next proposition:

\begin{proposition}
Suppose that $x_0, ..., x_N$, $u_0, ..., u_{N-1}$, $\lambda_0, ..., \lambda_N$, and $\nu$ satisfy the SOSC of the transformed OCP. 
Then, the solution $x_0, ..., x_N$, $u_0, ..., u_{N-1}$ is a strict local minimum of the original OCP. 
\end{proposition}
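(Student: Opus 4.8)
The plan is to obtain this as an immediate consequence of Theorem 4.2 together with the classical second-order sufficiency theorem for equality-constrained nonlinear programs. First I would note that the original OCP (Problem 1) is a finite-dimensional smooth optimization problem in the stacked variable $w := (x_0, \dots, x_N, u_0, \dots, u_{N-1})$ with objective (\ref{eq:cost}) and the smooth equality constraints (\ref{eq:f}), (\ref{eq:phi}), and (\ref{eq:x0}). By hypothesis the given data satisfy the SOSC of the transformed OCP, so Theorem 4.2 provides multipliers $\lambda_0 ^*, \dots, \lambda_N ^*, \nu ^*$ for which $(w, \lambda^*, \nu^*)$ satisfies the SOSC of the original OCP: the FONC (\ref{eq:KKT:phix}), (\ref{eq:KKT:Hx}), (\ref{eq:KKT:Hu}) together with (\ref{eq:HxkOriginal}) hold, and the Lagrangian Hessian is positive definite on the subspace of feasible variations, which is exactly the content of (\ref{eq:proof:LagrangianOrigin}).

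Second I would invoke the standard second-order sufficiency result (see, e.g., \cite{bib:Bertsekas:2016}, or the statement used in \cite{bib:Bryson:1975}): a feasible point of an equality-constrained NLP which, for some choice of multipliers, satisfies the FONC and has its Lagrangian Hessian positive definite on the null space of the active-constraint Jacobian is a strict local minimum. The feasible variations appearing in (\ref{eq:proof:LagrangianOrigin}), namely those with $\delta x_0 ^* = 0$, $(I + f_{x, i}) \delta x_i ^* + f_{u, i} \delta u_i ^* - \delta x_{i+1} ^* = 0$, and $\phi_x \delta x_k ^* = 0$, are precisely the elements of the null space of the Jacobian of (\ref{eq:f}), (\ref{eq:x0}), (\ref{eq:phi}) at $w$; hence the hypotheses of the classical theorem hold verbatim for Problem 1 with the multipliers produced by Theorem 4.2, and we conclude that $w = (x_0, \dots, x_N, u_0, \dots, u_{N-1})$ is a strict local minimum of the original OCP.

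I do not anticipate a genuine obstacle here: all the substantive work — the transfer of the first-order conditions (Theorem 4.1) and of the restricted positive-definiteness of the Hessian (Theorem 4.2) from the transformed problem to the original one — has already been carried out, and what remains is the textbook passage from the SOSC to strict local optimality. The only point worth a line of care is to confirm that the second-order condition obtained in the proof of Theorem 4.2 is indeed stated over the full tangent space of the active constraints of Problem 1; this holds because (\ref{eq:proof:phix:equivalence}) identifies $\phi_x(x_k)$ with the Jacobian that enters the transformed constraint, so no constraint qualification beyond what is already implicit in the SOSC is required.
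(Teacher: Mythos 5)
Your proposal is correct and takes essentially the same route as the paper: invoke Theorem~4.2 to obtain multipliers for which the SOSC of the original OCP holds, then apply the classical second-order sufficiency theorem for equality-constrained programs to conclude strict local optimality. The paper's own proof is just a one-line version of this same argument.
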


\begin{proof}
Because the solution $x_0, ..., x_N$, $u_0, ..., u_{N-1}$ with the Lagrange multipliers satisfies the SOSC of the original OCP, as indicated by Theorem 4.2, the solution $x_0, ..., x_N$, $u_0, ..., u_{N-1}$ is a strict local minimum of the original OCP.
\end{proof}

\section{Numerical Experiments on Whole-Body Quadrupedal Gaits Optimization}\label{section:numericalExperiments}

\subsection{Experimental Settings}
To demonstrate the effectiveness of the proposed method over existing methods, we conducted numerical experiments on the whole-body optimal control of a quadrupedal robot ANYmal for various gaits.
The equation of motion of the full 3D model of the quadrupedal robot is of the form of (\ref{eq:stateEquationForm}).
Moreover, a pure-state constraint whose Jacobian is of the form (\ref{eq:positionLevelConstraints}) is imposed just before each impact between the leg and the ground, which is termed the switching constraint \cite{bib:penalty, bib:DDP:jumpRobot}.
We compare the following three Riccati recursion algorithms based on the direct-multiple shooting method and Gauss-Newton Hessian approximation with various constraint handling methods:
\begin{itemize}
    \item The proposed method
    \item The Riccati recursion with pure-state constraints \cite{bib:constrainedRiccati} 
    \item The AL method \cite{bib:nocedal}
\end{itemize}
We implemented these three algorithms in C++ and used Pinocchio \cite{bib:pinocchio}, an efficient C++ library used for rigid-body dynamics and its analytical derivatives, to compute the dynamics and its derivatives of the quadrupedal robot.
We used OpenMP \cite{bib:OpenMP} for parallel computing (e.g., line 1 of Algorithm 1) and four threads through the following experiments. 
To consider the practical situation, we also imposed inequality constraints on the joint angle limits, joint angular velocity limits, and joint torque limits of each joint.  
We used the primal-dual interior point method \cite{bib:ipopt} with fixed barrier parameters for the inequality constraints.
None of the three methods used line search; they only used the fraction-to-boundary rule \cite{bib:ipopt} for step-size selection.
We fixed the instants of the impact between the robot and the ground in the following experiments and did not treat them as optimization variables as in \cite{bib:penalty, bib:DDP:jumpRobot}, to focus on the evaluation of the constraint-handling methods.
All experiments were conducted on a laptop with a hexa-core CPU Intel Core i9-8950HK @2.90 GHz.

In the following two experiments, we considered that the OCP converges when the $l_2$-norm of the residuals in the Karush—Kuhn--Tucker (KKT) conditions, which we refer to as the KKT error, becomes smaller than a prespecified threshold.
The KKT conditions are composed of the FONC and primal and dual residuals in the inequality constraints.
For example, the KKT conditions of the proposed method are composed of (\ref{eq:f}), (\ref{eq:x0}), (\ref{eq:phi:relaxed}), (\ref{eq:KKT:phix})--(\ref{eq:KKT:Hu:k-2}), and the residuals in the inequality constraints.
The KKT conditions of the Riccati recursion with pure-state constraints \cite{bib:constrainedRiccati} and the AL method are only slightly different depending on the method used to treat pure-state equality constraints.

\subsection{Trotting Gait for Different Numbers of Steps}
First, we evaluated the performances of the three methods for different total dimensions of pure-state equality constraints by considering the trotting gaits of ANYmal with different numbers of trotting steps. 
A six-dimensional (three-dimensional for each impact leg) pure-state equality constraint (switching constraint) was imposed on the OCPs for each trotting step.
We chose the number of trotting steps from 2, 4, 6, 8, and 10 and measured the CPU time per Newton iteration and the total CPU time until convergence 
(we chose $1.0 \times 10^{-10}$ as the convergence tolerance of the KKT errors).
The settings used for the OCPs (horizon length $T$, number of grids $N$, and total dimension of equality constraints (\ref{eq:phi})) are listed in Table I.
We carefully tuned the parameters of the AL method \cite{bib:nocedal}. For example, we chose the initial penalty parameter as $p = 5$ and the penalty update value as $\beta = 8$; that is, the AL method updates the penalty parameter as $p \leftarrow \beta p$ when the KKT error excluding the constraint violation (\ref{eq:phi}) is smaller than a tolerance that is also tuned carefully \cite{bib:nocedal}.

Figure \ref{fig:trotting} depicts the CPU time per Newton iteration (left figure) and the total CPU time until convergence (right figure) of each method.
As depicted in the left figure of Fig. \ref{fig:trotting}, the CPU time per Newton iteration in the proposed method was almost the same as that in the AL method, whereas the Riccati recursion with pure-state constraints \cite{bib:constrainedRiccati} took more computational time when compared with the other two methods.
The right figure of Fig. \ref{fig:trotting} also indicates that the proposed method achieved the fastest convergence.
The proposed method took exactly the same number of iterations (approximately 20) until convergence as the Riccati recursion with pure-state constraints \cite{bib:constrainedRiccati} in all the cases. Therefore, the proposed method was faster than it in terms of the total CPU time as in the case of the per Newton iteration. 
The AL method was significantly slower than the other two methods with respect to the total CPU time because it required approximately 80 iterations in all the cases, although we carefully tuned the AL algorithm.

\begin{table}[tb]
\centering
\caption{
Settings of OCPs for each number of trotting steps
}
\begin{tabular}{|c||r|r|r|r|r|}
\hline
No. of trotting steps & 2    & 4    & 6    & 8    & 10   \\ \hline \hline
Horizon length $T$ & 1.55 & 2.55 & 3.55 & 4.55 & 5.55 \\ \hline
No. of grids $N$ & 35  & 59 & 83 & 107  & 131 \\ \hline
Total dim. of (\ref{eq:phi}) & 12   & 24   & 36   & 48   & 60   \\ \hline
\end{tabular}
\end{table}

\begin{figure}[tb]
    \centering
    \includegraphics[scale=0.635]{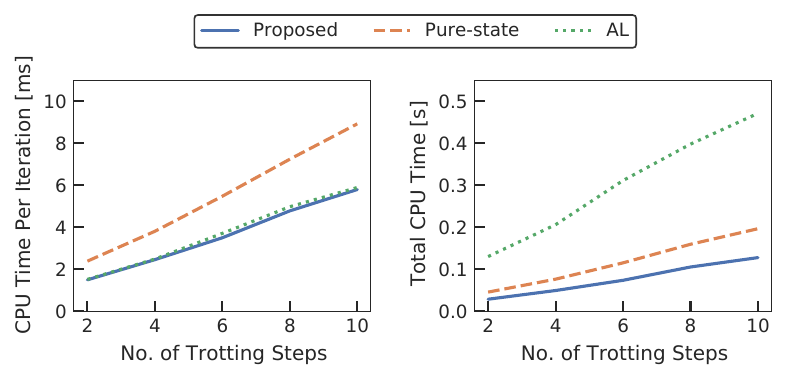}
    \caption{
    (Left) CPU time per Newton iteration and (right) total CPU time until convergence for different numbers of trotting steps in the proposed method (Proposed), Riccati recursion with pure-state constraints \cite{bib:constrainedRiccati} (pure state), and the AL method (AL).
    }
    \label{fig:trotting}
\end{figure}

\subsection{Trotting, Jumping, and Running Gait Problems}
Next, we investigated the performances of the proposed method in three different quadrupedal gaits: trotting, jumping, and running gaits, of which the jumping and running gaits are particularly highly nonlinear and complicated problems.
Each jumping step imposes a 12-dimensional pure-state equality constraint, and each running step imposes a 6-dimensional one.
We summarize the settings of each problem (horizon length $T$, number of grids $N$, number of steps, total dimension of equality constraints (\ref{eq:phi}), tolerance of convergence, and initial penalty parameter of the AL method $p_{\rm init}$) in Table II.
As done in the preceding example, we carefully tuned the parameters of the AL method, that is, $p_{\rm init}$ and the update rule of the penalty and the Lagrange multiplier, for each problem.
We measured the KKT errors with respect to the number of iterations, and the total number of iterations and CPU time until convergence.

Figure \ref{fig:convergence} depicts the $\log_{10}$-scaled KKT error of each method for the three gait problems with respect to the number of iterations.
We can see that the convergence behavior of the proposed method was almost the same as that of the Riccati recursion with pure-state constraints \cite{bib:constrainedRiccati}.
In contrast, the AL method resulted in significantly slow convergence because it needs to update the penalty parameter and Lagrange multiplier to reduce the constraint violation, which we can see in the peaks in the KKT error of the AL method in Fig. \ref{fig:convergence}.
Figure \ref{fig:CPUtime} indicates the number of iterations and total CPU time until convergence of each method.
We can see that the total number of iterations of the proposed method was almost the same as that of the Riccati recursion with pure-state constraints \cite{bib:constrainedRiccati}, whereas the AL method required a significantly large number of iterations. 
In addition, as each iteration of the proposed method was faster than that of the Riccati recursion with pure-state constraints \cite{bib:constrainedRiccati}, as in the previous experiment, the proposed method achieved the fastest convergence.

\begin{table}[tb]
\centering
\caption{
Settings of OCPs for trotting, jumping, and running gaits.
}
\begin{tabular}{|c||r|r|r|}
\hline
Gait type          & Trotting & Jumping & Running \\ \hline \hline
Horizon length $T$ & 6.05     & 5       & 7    \\ \hline
No. of grids $N$   & 143      & 107     & 346     \\ \hline
No. of steps       & 11       & 3       & 26      \\ \hline
Total dim. of (\ref{eq:phi})  & 66      & 36      & 156     \\ \hline
KKT tolerance      & $1.0\times 10 ^{-10}$ & $1.0\times 10 ^{-10}$ & $1.0\times 10 ^{-8}$       \\ \hline
$p_{\rm init}$     & 5        & 1000    & 5       \\ \hline
\end{tabular}
\end{table}

\begin{figure}[tb]
    \centering
    \includegraphics[scale=0.63]{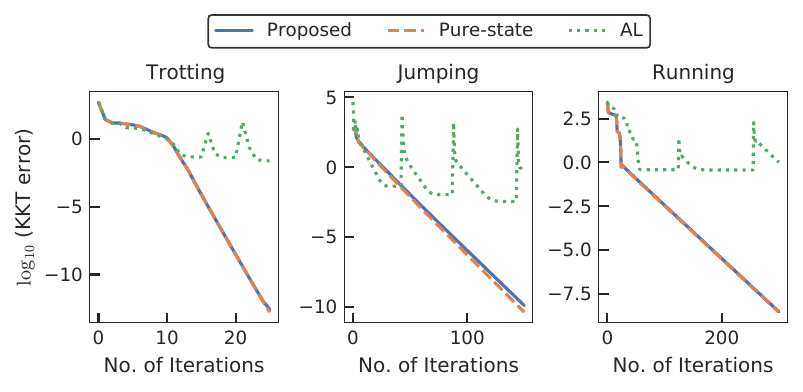}
    \caption{
    $\log_{10}$-scaled KKT errors of the proposed method (Proposed), the Riccati recursion with pure-state constraints \cite{bib:constrainedRiccati} (Pure-state), and the AL method (AL) for the three gait problems with respect to the number of iterations. 
    It should be noted that the KKT errors include the violations of the switching constraints.
    The graphs of the AL method have peaks when the penalty parameter and Lagrange multipliers are updated.
    }
    \label{fig:convergence}
\end{figure}

\begin{figure}[tb]
    \centering
    \includegraphics[scale=0.63]{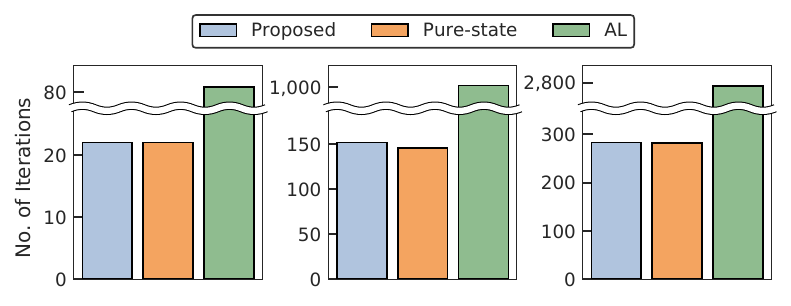}
    \includegraphics[scale=0.63]{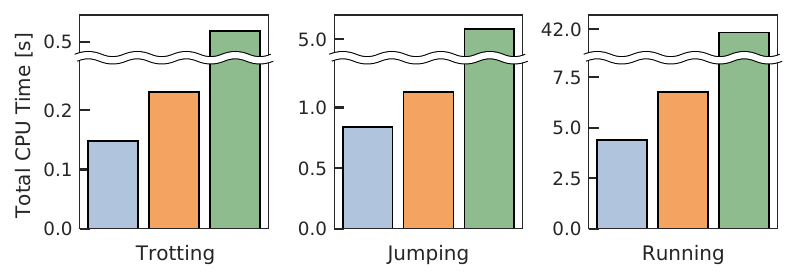}
    \caption{
    Number of iterations and total CPU time until convergence of the proposed method (Proposed), the Riccati recursion with pure-state constraints \cite{bib:constrainedRiccati} (Pure-state), and the AL method (AL) for the three gaits problems.
    }
    \label{fig:CPUtime}
\end{figure}

\section{Conclusions}\label{section:conclu}

We proposed a novel approach to efficiently treat pure-state equality constraints in OCPs with a Riccati recursion algorithm.
The proposed method transforms a pure-state equality constraint into a mixed state-control constraint such that the constraint is expressed by variables at a certain previous time stage.
We derived a Riccati recursion algorithm to solve the transformed OCP with linear time complexity in the grid number of the horizon, in contrast to the previous approach \cite{bib:constrainedRiccati}, which scaled cubically with respect to the total dimension of the pure-state equality constraints.
Because the proposed method is essentially a Newton's method for an optimization problem with equality constraints, the proposed method achieves superlinear or quadratic convergence, which distinguishes our approach from the penalty function method and the AL method in terms of the convergence property.
We showed that if the solution satisfies the FONC and/or SOSC of the transformed OCP, then the solution also satisfies the FONC and/or SOSC of the original OCP.
Therefore, if we find a solution that satisfies the SOSC of the transformed OCP, it is a local minimum of the original OCP.
We performed numerical experiments on the whole-body optimal control of quadrupedal gaits that involve pure-state equality constraints owing to contact switches and demonstrated the effectiveness of the proposed method over the approach of \cite{bib:constrainedRiccati} and the AL method. 

Our future work will include applying the proposed method with switching time optimization problems \cite{bib:penalty, bib:DDP:jumpRobot}.
We further extend the proposed method to constraints whose relative degree is larger than 2.

% \addtolength{\textheight}{-12cm}   % This command serves to balance the column lengths
                                  % on the last page of the document manually. It shortens
                                  % the textheight of the last page by a suitable amount.
                                  % This command does not take effect until the next page
                                  % so it should come on the page before the last. Make
                                  % sure that you do not shorten the textheight too much.

\section*{Acknowledgment}
This work was partly supported by JST SPRING, Grant Number JPMJSP2110.

%%%%%%%%%%%%%%%%%%%%%%%%%%%%%%%%%%%%%%%%%%%%%%%%%%%%%%%%%%%%%%%%%%%%%%%%%%%%%%%%

%%%%%%%%%%%%%%%%%%%%%%%%%%%%%%%%%%%%%%%%%%%%%%%%%%%%%%%%%%%%%%%%%%%%%%%%%%%%%%%%

%%%%%%%%%%%%%%%%%%%%%%%%%%%%%%%%%%%%%%%%%%%%%%%%%%%%%%%%%%%%%%%%%%%%%%%%%%%%%%%%

%%%%%%%%%%%%%%%%%%%%%%%%%%%%%%%%%%%%%%%%%%%%%%%%%%%%%%%%%%%%%%%%%%%%%%%%%%%%%%%%

\bibliographystyle{IEEEtran}
\bibliography{IEEEabrv, ieee}

\end{document}